\newtheorem{thm}{Theorem}
\newtheorem{lem}{Lemma}
\newtheorem{conj}{Conjecture}
\newtheorem{qst}{Question}
\theoremstyle{definition}					
\newtheorem{rem}{Remark}
\newcommand{\R}{\mathbb{R}}
\newcommand{\N}{\mathbb{N}}
\newcommand{\Z}{\mathbb{Z}}
\newcommand{\Q}{\mathbb{Q}}
\title{A note on the Hausdorff dimension of some liminf sets appearing in simultaneous Diophantine approximation}
\author{Faustin ADICEAM\\
   Department of Mathematics, Logic House,\\ 
   National University of Ireland at Maynooth\\
   email~:\texttt{fadiceam@gmail.com}}
\date{}
\begin{document}
\maketitle

\begin{abstract}
Let $Q$ be an infinite set of positive integers. Denote by $W_{\tau, n}(Q)$ (resp. $W_{\tau, n}$) the set of points in dimension $n\ge 1$ simultaneously $\tau$--approximable by infinitely many rationals with denominators in $Q$ (resp. in $\N^*$). A non--trivial lower bound for the Hausdorff dimension of the liminf set $W_{\tau, n}\backslash W_{\tau, n}(Q)$ is established when $n\ge 2$ and $\tau >1+1/(n-1)$ in the case where the set $Q$ satisfies some divisibility properties. The computation of the actual value of this Hausdorff dimension as well as the one--dimensional analogue of the problem are also discussed.
\end{abstract}

Let $n \ge 1$ be an integer and let $\tau > 1$ be a real number. Consider the classical set of simultaneously $\tau$--approximable points in dimension $n$, viz. $$W_{\tau, n} := \left\{ (x_1, \dots, x_n)\in \R^n \; : \; \forall i\in \llbracket 1, n \rrbracket,    \left|x_i-\frac{p_i}{q}\right|<\frac{1}{q^{\tau}} \; \; \mbox{ for  i.m. } q\ge 1 \right\}.$$ Here and in what follows, \textit{i.m.} $q\ge 1$ stands for \textit{infinitely many integers $q\ge 1$ associated to fractions $p_1/q, \dots, p_n/q$} and $\llbracket 1, n \rrbracket$ denotes the interval of integer $\{1, \dots, n\}$. The Hausdorff dimension of a set $X \subset \R^n$ shall be denoted by $\dim X$.

From the multidimensional generalization of the theorem of Jarn\'ik and Besicovitch (see for instance~\cite{multijar}), the Hausdorff dimension of the set $W_{\tau, n}$ is known to be 
\begin{equation}\label{jarbes}
\dim W_{\tau, n} = \frac{n+1}{\tau}
\end{equation} 
as soon as $\tau > 1+1/n$. Borosh and Fraenkel~\cite{broshfraenkel} have extended this result in the following way~: for any infinite set $Q$ of positive natural numbers, define the exponent of convergence $\nu(Q)$ of $Q$ as 
\begin{equation*}
\nu(Q) := \inf \left\{\nu >0 \; : \; \sum_{q\in Q} q^{-\nu} < \infty \right\} \in [0,1].
\end{equation*}
Then, the Hausdorff dimension of the set $$W_{\tau, n}(Q) := \left\{ (x_1, \dots, x_n)\in \R^n \; : \; \forall i\in \llbracket 1, n \rrbracket,    \left|x_i-\frac{p_i}{q}\right|<\frac{1}{q^{\tau}} \; \; \mbox{ for i.m. } q\in Q \right\}$$ is 
\begin{equation}\label{borofraen}
\dim W_{\tau, n}(Q) = \frac{n+\nu(Q)}{\tau}
\end{equation}
when $\tau > 1+ \nu(Q)/n$. For recent developments and related results see~\cite{glylivre} (in particular Chapters 6 and 10), \cite{dickirynne} and \cite{rynne}.

This paper is concerned with some properties of the complement of $W_{\tau, n}(Q)$, namely, $$W_{\tau, n}^*(Q) := W_{\tau, n} \backslash W_{\tau, n}(Q);$$ that is, 
\begin{equation}\label{wtnq}
W_{\tau, n}^*(Q) := \left\{ (x_1, \dots, x_n)\in \R^n \; : \; \forall i\in \llbracket 1, n \rrbracket,    \left|x_i-\frac{p_i}{q}\right|<\frac{1}{q^{\tau}} \; \; \mbox{ for  i.m. } q \not\in Q \mbox{ and f.m. } q\in Q \right\},
\end{equation}
where \textit{f.m.} stands for \textit{finitely many}. To the best of the author's knowledge, the actual Hausdorff dimension of the liminf set $W_{\tau, n}^*(Q)$ is unknown in the general case. This seems to be a very difficult problem which shall be discussed here in the case where $Q$ is a set defined by divisibility properties. 

More precisely, let $Q$ be any infinite set of positive integers satisfying the following condition~: for any integer $q\in \N^*$, 
\begin{equation}\label{defQ}
\left(q\in Q \right) \; \Leftrightarrow \; \left(\forall v\not\in Q, \: v\!\!\!\not| q \right).
\end{equation} 

Such a set $Q$ shall be called an \textit{$\N^*\backslash Q$--free set} following the definition of a $B$--free set introduced by Erd\H{o}s~\cite{erdos} \footnote{However, unlike Erd\H{o}s, the convergence of the series $\sum_{q\in \N^*\backslash Q} q^{-1}$ is not required here. Notice that a set $Q$ which is $\N^*\backslash Q$--free must contain 1.}. Examples of sets satisfying this property are the square--free integers (or, more generally, the $k$--free integers with $k\ge 2$ --- conventionally, one is considered here as a $k$--free integer for any $k\ge2$) or the set of integers coprime to a given natural number $m\neq 1$. 

For the definition of $W_{\tau, n}^*(Q)$ to make sense, it is natural to impose the condition $Q \varsubsetneq \N^*$, which shall be assumed throughout. It is indeed easily seen that such a condition ensures that the complement set $\N^*\backslash Q$ is infinite.

An elementary property of the Hausdorff dimension (see for instance~\cite{berdod}, p.65) leads to the relationship 
\begin{equation*}
\dim W_{\tau, n} = \max \left\{\dim W_{\tau, n}(Q),\, \dim W_{\tau, n}^*(Q) \right\}.
\end{equation*}
It follows from~(\ref{jarbes}) and~(\ref{borofraen}) that $\dim W_{\tau, n}^*(Q) = (n+1)/\tau$ as soon as $\nu(Q)<1$ and $\tau > 1+1/n$. In the case $\nu(Q)=1$ however, the situation is much less well understood and the computation of the actual Hausdorff dimension of the set $W_{\tau, n}^*(Q)$ probably involves some deep results on the distribution of $B$--free numbers which are still in the state of conjecture, as for instance those mentioned by Erd\H{o}s in~\cite{erdos}. It is nevertheless always possible to find a non--trivial lower bound for $\dim W_{\tau, n}^*(Q)$ if $n\ge 2$. 

\begin{thm}\label{princi}
Let $Q$ be an infinite $\N^*\backslash Q$--free set. Assume that $n\ge 1$. Then 
$$
\dim W_{\tau, n}^*(Q) \left\{
    \begin{array}{ll}
        =(n+1)/\tau & \mbox{ if }\; \nu(Q)<1,\; \tau > 1+1/n \;\mbox{ and }\; n\ge 1 \\
        \in \left[n/\tau , (n+1)/\tau \right]& \mbox{ if } \;\nu(Q) = 1,\; \tau >1+1/(n-1) \;\mbox{ and }\; n\ge 2.
    \end{array}
\right.
$$
\end{thm}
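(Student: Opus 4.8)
The first case is essentially immediate from the discussion already given: when $\nu(Q)<1$ and $\tau>1+1/n$, the Borosh--Fraenkel formula~(\ref{borofraen}) gives $\dim W_{\tau,n}(Q)=(n+\nu(Q))/\tau<(n+1)/\tau=\dim W_{\tau,n}$, and since $\dim W_{\tau,n}=\max\{\dim W_{\tau,n}(Q),\dim W_{\tau,n}^*(Q)\}$, one concludes $\dim W_{\tau,n}^*(Q)=(n+1)/\tau$. So the real content is the lower bound $\dim W_{\tau,n}^*(Q)\ge n/\tau$ in the case $\nu(Q)=1$, $n\ge2$, $\tau>1+1/(n-1)$; the upper bound $(n+1)/\tau$ there is trivial since $W_{\tau,n}^*(Q)\subset W_{\tau,n}$.

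The plan for the lower bound is to exhibit, inside $W_{\tau,n}^*(Q)$, a large explicit Cantor-type subset whose Hausdorff dimension is at least $n/\tau$. The idea is to fix one coordinate, say $x_n$, to be a fixed rational number $a/b$ (or more carefully, a number all of whose good rational approximations have denominators \emph{outside} $Q$), and to let the remaining $n-1$ coordinates $(x_1,\dots,x_{n-1})$ range over a well-chosen subset of $W_{\tau',n-1}(\N^*\setminus Q)$-type points, where the approximating denominators are forced to lie in $\N^*\setminus Q$. Here the $\N^*\setminus Q$--free hypothesis~(\ref{defQ}) is exactly what makes this work: if we build the $x_i$ using only denominators that are multiples of some fixed $v\notin Q$, then~(\ref{defQ}) guarantees these denominators themselves lie outside $Q$ (a denominator $q$ divisible by $v\notin Q$ cannot be in $Q$). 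Concretely I would take denominators of the form $q=v\cdot m$ ranging over a sparse sequence, set the $n$-th coordinate so that $|x_n-p_n/q|<q^{-\tau}$ holds for these same $q$ (e.g. $x_n$ itself a suitable limit, or a rational with denominator dividing $v$), and choose $x_1,\dots,x_{n-1}$ by a standard Jarník--Besicovitch Cantor construction relative to this denominator sequence. One must check two things: (i) every point so constructed lies in $W_{\tau,n}$ (infinitely many simultaneous approximations with denominators $q=vm\notin Q$), hence in $W_{\tau,n}^*(Q)$ provided (ii) no point has infinitely many simultaneous approximations with denominators \emph{in} $Q$ — which can be arranged by making the sequence of $q$'s grow fast enough and invoking a convergence (Borel--Cantelli) argument to kill approximations by the competing denominators in $Q$, while also pinning $x_n$ to avoid $Q$-denominator approximations.

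For the dimension count, the Cantor set in the first $n-1$ coordinates is built exactly as in the classical lower-bound proof of Jarník--Besicovitch: at stage $k$, around each surviving cube one places roughly $q_k^{n-1}$ subcubes of side $2q_k^{-\tau}$, corresponding to the choices of numerators $(p_1,\dots,p_{n-1})$ with denominator $q_k$. A mass distribution / Frostman argument then yields local dimension $\ge (n-1)/\tau$ for this $(n-1)$-dimensional Cantor set when $\tau>1+1/(n-1)$ (this is the precise point where that hypothesis on $\tau$ enters, ensuring the classical Jarník--Besicovitch regime in dimension $n-1$). Finally, since the full set is (a subset contained in) the product of this $(n-1)$-dimensional set with a single fixed point in the last coordinate — no, rather, one should instead allow $x_n$ to vary in a small interval of positive length, which contributes dimension $1$ — the product set has dimension $\ge (n-1)/\tau+1$... but one must be careful: allowing $x_n$ a full interval may re-introduce $Q$-approximations. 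The cleaner route, which I would adopt, is to let $x_n$ range over a \emph{one-dimensional liminf set} of the same flavour (all approximations with denominators outside $Q$), whose dimension is $\ge 1/\tau$ by the analogous rank-one argument, and take a product: $\dim(\text{Cantor}_{n-1}\times\text{Cantor}_1)\ge (n-1)/\tau+1/\tau=n/\tau$, using the standard inequality $\dim(A\times B)\ge \dim A+\dim B$. Since this product set is contained in $W_{\tau,n}^*(Q)$, the desired bound follows.

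The main obstacle I anticipate is step (ii): ensuring that the constructed points genuinely lie in the \emph{liminf} set, i.e. that they are approximable only finitely often by rationals with denominators in $Q$. This requires simultaneously (a) building the coordinates from denominators forced outside $Q$ via~(\ref{defQ}), and (b) a quantitative argument — growth of the Cantor denominator sequence plus a Borel--Cantelli estimate on the measure of points that are $\tau$-approximable by some $p/q$ with $q\in Q$ — to exclude unwanted $Q$-approximations on a full-measure (w.r.t. the Frostman measure) subset of the Cantor set, then restrict to that subset without lowering the dimension. Getting the bookkeeping right so that removing the ``bad'' points does not decrease the Hausdorff dimension, while keeping the whole construction inside $W_{\tau,n}$, is the delicate heart of the argument.
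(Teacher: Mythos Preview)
Your treatment of the case $\nu(Q)<1$ and of the upper bound $(n+1)/\tau$ in the case $\nu(Q)=1$ is fine and matches the paper. The gap is in the lower bound for $\nu(Q)=1$, and it lies precisely in the step you yourself flag as ``the delicate heart'': excluding $Q$-denominator approximations via Borel--Cantelli. With $\nu(Q)=1$ you have no useful convergence estimate on $\sum_{q\in Q}q^{-s}$ to feed into a Borel--Cantelli argument against the Frostman measure on your Cantor set; a back-of-the-envelope count (roughly $q^{n}$ rationals of denominator $q$, balls of $\mu$-mass $\asymp q^{-\tau\cdot n/\tau}=q^{-n}$) gives a divergent series, so the measure-zero conclusion simply does not follow. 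Your product route $\dim(\mathrm{Cantor}_{n-1}\times\mathrm{Cantor}_1)\ge n/\tau$ is therefore left without a proof that the product actually sits inside the \emph{liminf} set.

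The paper resolves this not by probability but by arithmetic, via the observation you almost made and then abandoned: take $x_n$ to be a fixed rational $u/v$ with $\gcd(u,v)=1$ and $v\notin Q$ (more generally, restrict to any rational hyperplane $\sum a_i x_i=u/v$). The key lemma is that \emph{every} sufficiently good simultaneous approximation $(p_1/q,\dots,p_n/q)$ to such a point must satisfy the same linear relation $\sum a_i p_i/q=u/v$, because $|qb-\sum a_ip_i|\le(\sum|a_i|)q^{1-\tau}\to 0$ forces the integer $qb-\sum a_ip_i$ to vanish. Hence $v\mid q$, and by the $\N^*\backslash Q$--free property~(\ref{defQ}) this gives $q\notin Q$ automatically: the liminf condition comes for free, with no exceptional set to remove. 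One then has $\Gamma_n(A,u,v)\cap W_{\tau,n}\subset W_{\tau,n}^*(Q)$, and the dimension $n/\tau$ of the left-hand side follows from Jarn\'ik--Besicovitch in dimension $n-1$ after a bi-Lipschitz projection of the hyperplane onto $\R^{n-1}$ (this is exactly where the hypothesis $\tau>1+1/(n-1)$ enters). No product formula and no Borel--Cantelli are needed.
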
 

\begin{rem}
If $Q$ is any $\N^*\backslash Q$--free set, define its $\mathrm{support}$ $\mathrm{Supp}(Q)$ as the set of all primes dividing at least one element in $Q$. It is readily seen that, if the support of $Q$ is finite, then $\nu(Q)=0$. 

On the other hand, if $\mathcal{S}$ is an infinite set of positive integers such that the series $\sum_{s\in \mathcal{S}} s^{-\mu}$ diverges for some $\mu >0$, one may construct by the diagonal process a subset $\mathcal{S'}$ of $\mathcal{S}$ such that the series $\sum_{s\in \mathcal{S'}} s^{-\mu}$ diverges and the series $\sum_{s\in \mathcal{S'}} s^{-\mu-\epsilon}$ converges for any $\epsilon >0$ (that is, a subset $\mathcal{S'}$ such that $\nu(S')=\mu$). Since the series of the reciprocal of the primes is divergent, this shows that, for any $\alpha \in (0,1]$, there exists an $\N^*\backslash Q$--free set $Q$ such that $\nu(Q)=\alpha$.

It should also be noticed that the exponent of convergence of an $\N^*\backslash Q$--free set $Q$ depends only on its support. Indeed, for any such set, it should be clear that 
\begin{equation}\label{inegseries}
\sum_{\pi\in\mathrm{Supp}(Q)}\frac{1}{\pi^{\nu}} \; \le \; \sum_{q\in Q} \frac{1}{q^{\nu}} \; \le \; \sum_{n\in \mathcal{C}\left(\mathrm{Supp}(Q) \right)}\frac{1}{n^{\nu}},
\end{equation}
where $\nu >0$ and $\mathcal{C}\left(\mathrm{Supp}(Q) \right)$ is the set of positive integers all of whose prime factors belong to $\mathrm{Supp}(Q)$. The series on the right--hand side of~(\ref{inegseries}) admits an Euler product expansion given by
\begin{equation*}
\sum_{n\in \mathcal{C}\left(\mathrm{Supp}(Q) \right)}\frac{1}{n^{\nu}} = \prod_{\pi\in\mathrm{Supp}(Q)} \left(1+ \frac{1}{\pi^{\nu}-1} \right).
\end{equation*}
Taking the logarithm of this last quantity, it is readily seen that the right--hand side of~(\ref{inegseries}) converges if, and only if, the left--hand side of~(\ref{inegseries}) converges, hence the result.
\end{rem}

From Theorem~\ref{princi}, as soon as $\tau > 1+1/(n-1)$ (where $n\ge 2$), the inequality $$\dim W_{\tau, n}^*(Q) \ge \frac{n}{\tau}$$ holds true  as it does for the set $W_{\tau, n}(Q)$ when $\tau > 1+ \nu(Q)/n$ from~(\ref{borofraen}). The result of the theorem also implies that, for any fixed $\tau > 1$, $$ \dim W_{\tau, n}^*(Q) \underset{n \rightarrow +\infty}{\sim} \dim W_{\tau, n}(Q).$$ 

\paragraph{}
The proof of Theorem~\ref{princi} is based on the following lemma, which states that good simultaneous rational approximations to given rationally dependent real numbers must satisfy the same rational dependence relationship as do the given numbers.

\begin{lem}\label{fonda}
Let $n\ge 1$ be an integer and $\tau >1$ be a real number. Let $(x_1, \dots, x_n)\in \R^n$ be such that there exist integers $a_1, \dots, a_n$ and $b$ satisfying $$\sum_{i=1}^{n}a_i x_i = b.$$ Assume furthermore that, for all $i \in \llbracket 1, n  \rrbracket$, $$\left| x_i - \frac{p_i}{q} \right| < \frac{1}{q^{\tau}},$$ where $p_1/q, \dots, p_n/q$ are rational numbers.

Then, if $q$ is large enough (depending only on the integers $a_1, \dots, a_n$ and on the real number $\tau$), $$\sum_{i=1}^{n} a_i \frac{p_i}{q} = b.$$ 
\end{lem}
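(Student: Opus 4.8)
The plan is to estimate the rational number $\Delta := \sum_{i=1}^{n} a_i (p_i/q) - b$ in two competing ways: a small upper bound coming from the approximation hypothesis, and the trivial lower bound valid for any nonzero rational with denominator dividing $q$. For $q$ large these two bounds will only be compatible with $\Delta = 0$.

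First I would use the assumed linear relation $\sum_{i=1}^{n} a_i x_i = b$ to cancel the constant term and rewrite
\[
\Delta = \sum_{i=1}^{n} a_i \frac{p_i}{q} - b = \sum_{i=1}^{n} a_i\left(\frac{p_i}{q} - x_i\right) + \left(\sum_{i=1}^{n} a_i x_i - b\right) = \sum_{i=1}^{n} a_i\left(\frac{p_i}{q} - x_i\right).
\]
Then the triangle inequality together with the hypothesis $|x_i - p_i/q| < q^{-\tau}$ gives the upper bound
\[
|\Delta| \le \sum_{i=1}^{n} |a_i| \left|\frac{p_i}{q} - x_i\right| < \frac{1}{q^{\tau}}\sum_{i=1}^{n} |a_i|.
\]
On the other hand, $q\Delta = \sum_{i=1}^{n} a_i p_i - bq$ is an integer, so $\Delta$ is a rational number whose denominator divides $q$; hence either $\Delta = 0$ or $|\Delta| \ge 1/q$.

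Comparing the two estimates, it suffices to choose $q$ so that $\frac{1}{q^{\tau}}\sum_{i=1}^{n}|a_i| < \frac{1}{q}$, that is, $q^{\tau - 1} > \sum_{i=1}^{n}|a_i|$, equivalently $q > \bigl(\sum_{i=1}^{n}|a_i|\bigr)^{1/(\tau-1)}$; this is possible precisely because $\tau > 1$, and the threshold depends only on $a_1, \dots, a_n$ and $\tau$. For such $q$ one has $|\Delta| < 1/q$, which forces $\Delta = 0$, i.e. $\sum_{i=1}^{n} a_i p_i/q = b$, as required.

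I do not expect a genuine obstacle here: the argument is the standard principle that an integer of absolute value strictly less than one must vanish. The only point deserving a line of care is to make explicit that the threshold on $q$ involves neither $b$ nor the point $(x_1, \dots, x_n)$ — which is clear since $b$ has already been cancelled in the expression for $\Delta$ and the $x_i$ enter only through the uniform bound $q^{-\tau}$ — so that the dependence stated in the lemma is exactly what the computation produces.
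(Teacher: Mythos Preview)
Your argument is correct and is essentially the same as the paper's: both bound the integer $q\Delta = \sum_{i=1}^n a_i p_i - bq$ by $\left(\sum_{i=1}^n |a_i|\right)q^{1-\tau}$ via the approximation hypothesis and conclude that it vanishes once $q$ is large enough. The paper simply works directly with $q\Delta$ rather than with $\Delta$, but the computation and the threshold are identical.
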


\begin{proof}
The proof is straightforward~: notice that 
\begin{equation*}
\left|qb - \sum_{i=1}^{n} a_i p_i \right| = \left| \sum_{i=1}^{n} a_i(q x_i - p_i) \right| \le \frac{\sum_{i=1}^{n} \left|a_i \right|}{q^{\tau - 1}}\cdotp
\end{equation*}
Thus if $q$ is large enough, the left hand side of the above inequality is an integer with absolute value less than one and so vanishes.
\end{proof}

For any $A = (a_1, \dots, a_n)\in\N^{n}\backslash \left\{0 \right\}$ (where $n\ge 2$) and any $u,v \in \N^*$, denote by $\Gamma_n(A,u,v)$ the rational hyperplane 
\begin{equation}\label{hyperplan}
\Gamma_n(A,u,v) := \left\{(x_1, \dots, x_n)\in \R^n \; : \; \sum_{i=1}^{n}a_i x_i = \frac{u}{v} \right\}.
\end{equation}
Even if it means relabeling the axes, it shall always be assumed, without loss of generality, that $a_n \neq 0$.

For an infinite $\N^*\backslash Q$--free set $Q$, choose $v\in \N^*\backslash Q$ and $u\in\N^*$ coprime to $v$. Then Lemma~\ref{fonda} implies that 
\begin{equation}\label{inclus}
\Gamma_n(A,u,v) \cap W_{\tau, n}  \subset W_{\tau, n}^*(Q)
\end{equation} 
as soon as $\tau >1$. Thus to prove Theorem~\ref{princi} in the case where $\nu(Q)=1$, it suffices to establish that the dimension of the set $\Gamma_n(A,u,v) \cap W_{\tau, n}$ is the expected dimension of the set of $\tau$--well approximable points in dimension $n-1$ ($n\ge 2$).

\begin{lem}
Let $n\ge 2$ be an integer and let $\Gamma_n(A,u,v)$ be a rational hyperplane in $\R^n$ as defined by~(\ref{hyperplan}). Then for $\tau> 1+1/(n-1)$, $$\dim \left( \Gamma_n(A,u,v) \cap W_{\tau, n} \right) = \frac{n}{\tau}\cdotp$$
\end{lem}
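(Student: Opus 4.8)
The plan is to view $\Gamma_n(A,u,v) \cap W_{\tau,n}$ as (essentially) a copy of a set of $\tau'$-well approximable points in the affine hyperplane $\Gamma_n(A,u,v) \cong \R^{n-1}$, and to match the exponents so that $\tau' = \tau$ gives dimension $n/\tau$. Concretely, after relabeling we have $a_n \neq 0$, so on $\Gamma_n(A,u,v)$ one may solve $x_n = (u/v - \sum_{i=1}^{n-1} a_i x_i)/a_n$ and parametrise the hyperplane by $(x_1, \dots, x_{n-1}) \in \R^{n-1}$. Thus the first step is to set up the bi-Lipschitz coordinate map $\varphi : \R^{n-1} \to \Gamma_n(A,u,v)$, $(x_1, \dots, x_{n-1}) \mapsto (x_1, \dots, x_{n-1}, x_n)$, and record that $\dim$ is invariant under $\varphi$ and $\varphi^{-1}$.

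\smallskip

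\emph{Upper bound.} For the bound $\dim(\Gamma_n(A,u,v) \cap W_{\tau,n}) \le n/\tau$ I would use a direct covering argument. If $(x_1, \dots, x_n) \in \Gamma_n(A,u,v) \cap W_{\tau,n}$, then for infinitely many $q$ there are integers $p_1, \dots, p_n$ with $|x_i - p_i/q| < q^{-\tau}$ for all $i$; by Lemma~\ref{fonda}, for $q$ large the approximating point $(p_1/q, \dots, p_n/q)$ itself lies on $\Gamma_n(A,u,v)$. Hence for each large $q$ the relevant approximating points sit on the $(n-1)$-dimensional rational sub-lattice $\{(p_1/q, \dots, p_n/q) : \sum a_i p_i = qu/v\}$, which (intersected with a bounded region) has $O(q^{n-1})$ points, and around each we need a ball of radius $\asymp q^{-\tau}$. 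A standard Hausdorff--Cantelli argument then shows that for $s > n/\tau$ the $s$-dimensional Hausdorff measure is zero, because $\sum_q q^{n-1}(q^{-\tau})^s < \infty$.

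\smallskip

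\emph{Lower bound.} This is the main obstacle and where the hypothesis $\tau > 1 + 1/(n-1)$ is used. I would prove $\Gamma_n(A,u,v) \cap W_{\tau,n} \supset \varphi(W_{\sigma,n-1} + \text{shift})$ for a suitable exponent, up to a Lipschitz change of variables, so that~\eqref{jarbes} in dimension $n-1$ supplies a set of dimension $n/\sigma$. The point is that if $(x_1, \dots, x_{n-1})$ is simultaneously $\sigma$-approximable by fractions $p_i/q$ with a common denominator $q$, then choosing $p_n$ so that $\sum a_i p_i = qu/v$ (solvable for a positive density of admissible $q$, say those divisible by $v$, using $\gcd(a_n, \cdot)$ considerations) forces the $n$-th coordinate $x_n = \varphi(x_1,\dots,x_{n-1})_n$ to satisfy $|x_n - p_n/q| \le (\sum|a_i|/|a_n|)\, q^{-\sigma} < q^{-\tau}$ provided $\sigma$ slightly exceeds $\tau$; equality of the Jarník--Besicovitch exponents $n/\sigma = n/\tau$ in the limit is what dictates the threshold, and the constant loss from $\sum|a_i|/|a_n|$ is absorbed because one has a whole interval of valid $\tau$ once $\sigma = \tau$. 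The delicate part is handling the congruence constraint $\sum a_i p_i \equiv 0 \pmod{\text{stuff}}$ together with $q$ ranging over an arithmetic progression: one must check that restricting the denominators $q$ to such a progression does not change the exponent of convergence, and hence does not lower the Jarník--Besicovitch dimension (this is exactly the Borosh--Fraenkel phenomenon~\eqref{borofraen} with $\nu = 1$ for an arithmetic progression). Assembling the two bounds yields the claimed equality $\dim(\Gamma_n(A,u,v) \cap W_{\tau,n}) = n/\tau$.
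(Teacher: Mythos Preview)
Your approach is essentially the same as the paper's: parametrise the hyperplane by the first $n-1$ coordinates via a bi-Lipschitz map, get the upper bound from a standard covering (using Lemma~\ref{fonda} to see that the approximants themselves lie on $\Gamma_n(A,u,v)$, hence only $O(q^{n-1})$ of them per $q$), and for the lower bound embed an $(n-1)$-dimensional Jarn\'ik--Besicovitch set into the hyperplane.

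The only noteworthy difference is bookkeeping for the constant. You absorb the Lipschitz factor $\sum|a_i|/|a_n|$ by passing to an auxiliary exponent $\sigma>\tau$ and then letting $\sigma\to\tau$; this is correct but roundabout. The paper instead applies the general form of Jarn\'ik's theorem with approximating function $\psi(r)=K(A)^{-1}r^{-\tau}$, which directly gives dimension $n/\tau$ for \emph{any} fixed positive constant, so no limiting argument is needed. Your concern about the congruence constraint on $q$ (forcing $v\mid q$ and a divisibility by $a_n$) is legitimate---the paper glosses over it---but it is disposed of either by your Borosh--Fraenkel remark, or more simply by replacing the common denominator $q$ by $|a_n|vq$ and absorbing the resulting constant into $\psi$.
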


\begin{proof}
The upper bound for the Hausdorff dimension of the limsup set $\Gamma_n(A,u,v) \cap W_{\tau, n}$ may be computed with a standard covering argument. As for the lower bound, notice that Lemma~\ref{fonda} implies that 
\begin{equation*}
\Gamma_n(A,u,v) \cap W_{\tau, n} = \left\{(x_1, \dots, x_n) \in\R^n \; : \; 
\begin{split}
&  \forall i\in \llbracket 1, n \rrbracket, \,\,   \left|x_i-\frac{p_i}{q}\right|<\frac{1}{q^{\tau}} \\
& \mbox{for  i.m. } \left(\frac{p_1}{q}, \dots, \frac{p_n}{q}\right)\in\Gamma_n(A,u,v)
\end{split}
\right\}.
\end{equation*}
\sloppy For any vector $\bm{x} = (x_1,\dots,x_n)\in\R^n$ ($n\ge 2$), denote by $\bm{x_{n-1}}$ the subvector $\bm{x_{n-1}} := (x_1, \dots, x_{n-1})$ and by $\|\bm{x}\|_{\infty}$ the infinity norm of $\bm{x}$, viz. $\|\bm{x}\|_{\infty} = \underset{1\le i \le n}{\max} \left|x_i \right|$. Taking $K(A)= \left(\sum_{i=1}^{n}\left|a_i \right| \right)\left|a_n \right|^{-1}$, it is then readily verified that 
\begin{equation}\label{factice}
\|\bm{x_{n-1}} - \bm{y_{n-1}}\|_{\infty} \le \|\bm{x} - \bm{y}\|_{\infty} \le K(A)\|\bm{x_{n-1}} - \bm{y_{n-1}}\|_{\infty}
\end{equation}
for any $\bm{x}, \bm{y} \in \Gamma_n(A,u,v)$. Since the Hausdorff dimension of a set is invariant under a bi--Lipschitz transformation, this means that $$\dim \left( \Gamma_n(A,u,v) \cap W_{\tau, n} \right) = \dim V_{\tau}\left(\Gamma_n(A,u,v) \right),$$ where $$V_{\tau}\left(\Gamma_n(A,u,v) \right) = \left\{ (x_1, \dots, x_{n-1})\in\R^{n-1} \; : \; (x_1, \dots, x_{n-1}, x_n)\in \Gamma_n(A,u,v) \cap W_{\tau, n} \right\}.$$ \sloppy Notice that, if $(x_1, \dots, x_{n-1})\in \R^{n-1}$, then $x_n$ as appearing in the definition of $V_{\tau}\left(\Gamma_n(A,u,v) \right)$ is uniquely determined.

From~(\ref{factice}), it should be clear that the set $$U_{\tau, n-1}\left(K(A) \right) := \left\{(x_1, \dots, x_{n-1})\in\R^{n-1} \; : \; \forall i\in \llbracket 1, n-1 \rrbracket, \,   \left|x_i-\frac{p_i}{q}\right|<\frac{K(A)^{-1}}{q^{\tau}} \mbox{ for i.m. } q\ge 1 \right\}$$ is contained in $V_{\tau}\left(\Gamma_n(A,u,v) \right)$. To conclude the proof of the lemma, it suffices now to invoke Jarn\'ik's Theorem on simultaneous approximation~\cite{multijar} (Theorem~3) which may also be found in~\cite{linfor} (Theorem~1)~: in the latter reference, the choices $f(r)=r^s$, $\psi(r) = r^{-\tau}K(A)^{-1}$, $m=1$ and $n=n-1$ lead to the equality $\dim U_{\tau, n-1}\left(K(A) \right) = n/\tau$ if $\tau > 1+1/(n-1)$, the $n/\tau$--Hausdorff measure of  $U_{\tau, n-1}\left(K(A) \right)$ being infinite. Note that this also holds true for the sets $V_{\tau}\left(\Gamma_n(A,u,v) \right)$ and $\Gamma_n(A,u,v) \cap W_{\tau, n}$ since a set of infinite Hausdorff measure is transformed under a bi--Lipschitz transformation into a set of infinite Hausdorff measure.
\end{proof}
This completes the proof of Theorem~\ref{princi}.

\paragraph{}
Theorem~\ref{princi} does not give any information about what happens if $n=1$ and $\nu(Q)=1$. This situation is classical in Diophantine approximation~: some problems are easier to apprehend in higher dimensions than in dimension one (see for instance the conjecture of Duffin and Schaeffer). Nevertheless, it is sometimes possible to prove that the set $W_{\tau,1}^*(Q)$ is not empty, which is far from being obvious at first sight in the general case.

To illustrate this fact, consider a positive integer $m\neq 1$ and the $\N^*\backslash Q(m)$--free set 
\begin{equation}\label{qm}
Q(m):= \left\{ n\in\N^* \; : \; \gcd(n,m)=1 \right\}.
\end{equation} 
Assume that the integer $m$ is divisible by exactly $r$ prime numbers $\pi_1, \dots, \pi_r$ and set $\Pi := \left\{\pi_1, \dots, \pi_r \right\}$. For simplicity, let $W_{\tau}^*(\Pi)$ denote the set $W_{\tau,1}^{*}(Q(m))$, that is, 
\begin{equation}\label{wpi}
W_{\tau}^*(\Pi) = \left\{x\in\R \; : \; \left|x-\frac{p}{q} \right|< \frac{1}{q^{\tau}} \mbox{ for i.m. } q\not\in Q\left(\prod_{\pi\in\Pi}\pi \right) \mbox{ and f.m. } q \in Q\left(\prod_{\pi\in\Pi}\pi \right) \right\}.
\end{equation}
 
Thus an element $x\in W_{\tau}^*(\Pi)$ is $\tau$--well approximable by fractions whose denominators, except for a finite number of them, are necessarily divisible by a prime in $\Pi$. It shall be proved that $W_{\tau}^*(\Pi)$ is never empty as soon as $\tau >2$ and the cardinality of $\Pi$ is greater than or equal to 2.

To this end, the theory of continued fractions is needed~: if necessary, the reader is referred to~\cite{bug} (Chapter~1) for an account on the topic. Given a irrational number $x$, denote by $\left(a_s\right)_{s\ge 0}$ the sequence of its partial quotients and by $\left(p_s/q_s \right)_{s\ge 0}$ the sequence of its convergents, which are given by the recurrences
\begin{align}
p_s &= a_s p_{s-1}+p_{s-2} \label{pn}\\
q_s &= a_s q_{s-1}+q_{s-2} \label{qn}
\end{align}
for $s\ge 1$ along with the initial conditions $p_{-1}=1$, $p_0=a_0$, $q_{-1}=0$ and $q_0=1$. The convergents of $x\in\R\backslash\Q$ are related to its rational approximations in the following way~: if a non--zero rational number $p/q$ satisfies the inequality 
\begin{equation}\label{approx}
\left| x-\frac{p}{q}\right| < \frac{1}{2q^2},
\end{equation}
then $p/q$ is a convergent of $x$, i.e. there exists $s\in\N$ such that $p/q = p_s/q_s$.

The following theorem may now be proved~:

\begin{thm}\label{nonempty}
Let $\Pi$ be any subset of the primes containing at least two distinct prime numbers. Let $\tau >2$ be a real number. Then the set $W_{\tau}^*(\Pi)$ as defined by~(\ref{wpi}) contains uncountably many Liouville numbers.
\end{thm}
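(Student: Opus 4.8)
The plan is to construct, by prescribing partial quotients, uncountably many irrationals $x=[a_0;a_1,a_2,\dots]$ whose convergent denominators are, from $q_1$ on, all divisible by a prime in $\Pi$, while the partial quotients grow fast enough to force both membership in $W_\tau^*(\Pi)$ and Liouville--ness. Fix two distinct primes $\pi_1,\pi_2\in\Pi$ and put $m:=\prod_{\pi\in\Pi}\pi$. The assumption that $\Pi$ has at least two elements is essential: consecutive convergent denominators are coprime (one has $q_sp_{s-1}-p_sq_{s-1}=\pm 1$ from~(\ref{pn})--(\ref{qn})), so no single prime can divide all large $q_s$, whereas two primes used in alternation can.

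First I would build $(a_s)_{s\ge 0}$ inductively. Take $a_0=0$ and $a_1=\pi_1$, so that $q_1=\pi_1$ by~(\ref{qn}); then choose $a_2$ in the residue class $-\pi_1^{-1}\pmod{\pi_2}$ (legitimate since $\pi_1\ne\pi_2$), so that $\pi_2\mid q_2=a_2\pi_1+1$. For $s\ge 3$, assuming inductively that $\pi_1\mid q_{s-2}$ and $\pi_2\mid q_{s-1}$ when $s$ is odd (resp. $\pi_2\mid q_{s-2}$ and $\pi_1\mid q_{s-1}$ when $s$ is even), I let $a_s$ be a multiple of $\pi_1$ if $s$ is odd and of $\pi_2$ if $s$ is even; then~(\ref{qn}) yields $\pi_1\mid q_s$ (resp. $\pi_2\mid q_s$) and the induction continues. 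Hence $\gcd(q_s,m)>1$, i.e. $q_s\notin Q(m)$, for every $s\ge 1$, the only convergent denominator coprime to $m$ being $q_0=1$. At each step $s\ge 2$ there remain infinitely many admissible values of $a_s$ (an arithmetic progression, resp. the positive multiples of a fixed prime), so I may also impose $a_s\ge q_{s-1}^{\,s}$ for all $s\ge 2$; selecting at each such step one of two distinct admissible values according to a binary digit then gives an injection of $\{0,1\}^{\N}$ into $\R$, hence uncountably many pairwise distinct irrationals $x$.

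Next I would verify the two defining properties of $W_\tau^*(\Pi)$ for each such $x$. From $q_{s+1}\ge a_{s+1}q_s\ge q_s^{\,s+2}$ and $|x-p_s/q_s|<1/(q_sq_{s+1})$ one gets $|x-p_s/q_s|<q_s^{-s}$, which is $<q_s^{-\tau}$ for every $s>\tau$ and $<q_s^{-N}$ for every $s\ge N$; thus $x$ is $\tau$--approximable by the infinitely many fractions $p_s/q_s$ with $q_s\notin Q(m)$, and $x$ is a Liouville number. Conversely, suppose $q\in Q(m)$ and $|x-p/q|<q^{-\tau}$; writing $p/q=p'/q'$ in lowest terms, $q'\mid q$, so $\gcd(q',m)=1$ and $|x-p'/q'|<1/(q')^{\tau}$. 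If $q'>2^{1/(\tau-2)}$, then $1/(q')^{\tau}<1/(2(q')^{2})$, so by~(\ref{approx}) $p'/q'$ is a convergent of $x$; as $q'>1$ this contradicts the fact that $q_0=1$ is the only convergent denominator coprime to $m$. Hence $q'\le 2^{1/(\tau-2)}$, and since $x$ is irrational each of the finitely many such $q'$ bounds $q$ through $q^{-\tau}>|x-p'/q'|\ge\min_{a\in\Z}|x-a/q'|>0$. So only finitely many $q\in Q(m)$ carry a $\tau$--approximation, which together with the previous sentence gives $x\in W_\tau^*(\Pi)$.

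The crux, and the only delicate point, is the alternating divisibility construction: the two congruence conditions attached to each consecutive pair $(q_{s-1},q_s)$ must keep propagating forever, and this works precisely because $\Pi$ supplies two distinct primes — a single one being obstructed by the coprimality of consecutive $q_s$. The growth condition $a_s\ge q_{s-1}^{\,s}$ simultaneously yields Liouville--ness and the ``infinitely many good $q\notin Q(m)$'' half, while the ``finitely many good $q\in Q(m)$'' half is the routine reduction of well--approximating fractions to convergents via~(\ref{approx}), combined with the elementary remark that the approximation quality of an irrational by fractions with a fixed denominator is bounded away from zero. The restriction $\tau>2$ is used only to guarantee $1/(q')^{\tau}<1/(2(q')^{2})$ for large $q'$.
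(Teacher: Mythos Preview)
Your argument is correct, and it takes a genuinely different route from the paper's. The paper also builds irrationals whose convergent denominators $q_s$ ($s\ge 1$) are all divisible by one of two chosen primes in $\Pi$, but it does so much more rigidly: it forces each $q_s$ to be a \emph{pure} prime power, alternating $q_{2s}=\pi_0^{\alpha_{2s}}$ and $q_{2s+1}=\pi_1^{\alpha_{2s+1}}$, and must therefore solve a chain of congruences involving the multiplicative orders $\omega(\pi_i^{\alpha},\pi_{1-i})$. Liouville--ness is then deduced not from any growth condition imposed on the $a_s$, but as a black box from the Erd\H{o}s--Mahler theorem (three consecutive convergent denominators supported on a finite set of primes infinitely often $\Rightarrow$ Liouville). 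Your approach is more elementary on both counts: your divisibility scheme (just $\pi_1\mid q_s$ for odd $s$, $\pi_2\mid q_s$ for even $s$, enforced by a residue condition on $a_s$) is lighter than the prime--power construction, and your growth condition $a_s\ge q_{s-1}^{\,s}$ gives Liouville--ness directly without invoking Erd\H{o}s--Mahler. You also spell out the ``finitely many $q\in Q(m)$'' verification, which the paper leaves implicit in the sentence reducing everything to property~(\ref{approx}). What the paper's route buys is a connection to a classical theorem and a stronger structural statement about the constructed numbers (their convergent denominators lie in $\{\pi_0^a\pi_1^b:a,b\ge 0\}$); what yours buys is self--containment and a cleaner construction.
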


Theorem~\ref{nonempty} shall be derived from the following result, proved by Erd\H{o}s and Mahler~\cite{erdmah} (Theorem~2). The notation introduced above is kept~:

\begin{thm}[Erd\H{o}s \& Mahler, 1939]\label{erdmahl}
Let $x$ be a real number. Suppose that for an infinity of different indices $s\ge 1$ the denominators $q_{s-1}$, $q_s$, $q_{s+1}$ of three consecutive convergents of $x$ are divisible by only a finite system of prime numbers. Then $x$ is a Liouville number.
\end{thm}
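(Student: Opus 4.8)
The plan is to convert the smoothness hypothesis into super-polynomial growth of the convergent denominators, since that is precisely the continued-fraction characterisation of a Liouville number. Recall that the convergents satisfy $\frac{1}{q_s(q_s+q_{s+1})} < \left|x-\frac{p_s}{q_s}\right| < \frac{1}{q_s q_{s+1}}$, so that $x$ is a Liouville number if and only if $\limsup_{s\to\infty} \frac{\log q_{s+1}}{\log q_s} = +\infty$. Writing $\Pi = \{\pi_1,\dots,\pi_r\}$ for the fixed finite set of primes in the hypothesis and calling an integer $\Pi$--\emph{smooth} when all its prime factors lie in $\Pi$, it therefore suffices to show that the infinitude of indices $s$ for which $q_{s-1}, q_s, q_{s+1}$ are all $\Pi$--smooth forces $\log q_{s+1}/\log q_s$ to be unbounded along a subsequence. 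I would argue by contradiction: assume $x$ is not a Liouville number, so that there exists a constant $\kappa \ge 1$ with $q_{s+1} \le q_s^{\kappa}$ for all large $s$.

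The first ingredient is the elementary counting bound for smooth numbers: the number $\Psi_{\Pi}(N)$ of positive integers at most $N$ that are $\Pi$--smooth satisfies $\Psi_{\Pi}(N) \le \prod_{i=1}^{r}\left(1 + \frac{\log N}{\log \pi_i}\right) = O\!\left((\log N)^{r}\right)$, since such an integer is determined by its exponent vector $(e_1,\dots,e_r)$ subject to $\sum_{i} e_i \log \pi_i \le \log N$. Under the standing assumption $q_{s+1}\le q_s^{\kappa}$, every smooth triple is confined to a narrow multiplicative window, namely $q_s^{1/\kappa} \le q_{s-1} < q_s < q_{s+1} \le q_s^{\kappa}$ for all large $s$.

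The decisive ingredient is the arithmetic linkage supplied by the recurrence~(\ref{qn}). Since consecutive convergent denominators are coprime, neither $q_{s-1}$ nor $q_{s+1}$ shares a prime factor with $q_s$; combined with $q_{s+1} = a_{s+1} q_s + q_{s-1}$ this yields the congruence $q_{s+1} \equiv q_{s-1} \pmod{q_s}$, in which $q_{s-1}$ is a fixed residue coprime to $q_s$ and $q_{s+1}$ is itself a smooth integer built only from the primes of $\Pi' := \Pi \setminus \{\pi : \pi \mid q_s\}$. The heart of the proof is then to count how many smooth integers can lie in this prescribed residue class below the ceiling $q_s^{\kappa}$: the aim is to show that, for each fixed $C\ge 1$ and all sufficiently large triple-indices $s$, the residue class $q_{s-1} \pmod{q_s}$ contains no $\Pi$--smooth integer at all in the range $(q_s, q_s^{C}]$. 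Granting this, the smooth integer $q_{s+1}$ lying in that class must satisfy $q_{s+1} > q_s^{C}$ for every fixed $C$ once $s$ is large, whence $\log q_{s+1}/\log q_s \to \infty$, contradicting the boundedness imposed by the non-Liouville assumption and finishing the proof.

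The main obstacle is exactly this last counting step: controlling smooth numbers in a single residue class to a modulus $q_s$ that is almost as large as the range $q_s^{\kappa}$ under study is a genuinely arithmetic matter, and the heuristic bound $\Psi_{\Pi}(N)/q_s$ coming from equidistribution is both unavailable in this regime and stronger than needed. I would instead exploit the special feature that the modulus $q_s$ is itself $\Pi$--smooth and coprime to the residue, splitting the congruence, via the Chinese Remainder Theorem across the prime powers dividing $q_s$, into the conditions $e_j = 0$ for each $\pi_j \mid q_s$ together with one residual congruence; this reduces the question to the distribution of $\Pi'$--smooth numbers, governed by the strictly smaller set of primes $\Pi'$. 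The resulting drop from $r$ to at most $r-1$ governing primes is what makes the estimate naturally approachable by induction on $|\Pi|$, the base case $|\Pi|=1$ being immediate since two coprime powers of a single prime cannot both exceed $1$, so that the windowed triple structure cannot persist for large $s$.
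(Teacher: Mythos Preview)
The paper does not actually prove this theorem: it is quoted from Erd\H{o}s and Mahler's 1939 paper and used as a black box in the proof of Theorem~\ref{nonempty}. So there is no in--paper argument to compare your attempt against.

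Regarding your proposal itself, the reduction to showing $\limsup_s \log q_{s+1}/\log q_s=\infty$, the congruence $q_{s+1}\equiv q_{s-1}\pmod{q_s}$, and the observation that $q_{s\pm1}$ are $(\Pi\setminus S)$--smooth for $S=\{\pi\in\Pi:\pi\mid q_s\}$ are all correct and relevant. The gap is exactly where you place ``the main obstacle'': the proposed induction on $|\Pi|$ does not close. Your inductive hypothesis is the theorem itself for smaller prime sets, i.e.\ a statement about three consecutive \emph{convergent denominators} being $\Pi'$--smooth. After your reduction, however, the question has become whether a $\Pi'$--smooth integer can lie in a prescribed residue class modulo a $(\Pi\setminus\Pi')$--smooth modulus $q_s$ inside $(q_s,q_s^{C}]$. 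There is no continued fraction left in this reduced problem, so the inductive hypothesis for $\Pi'$ has nothing to act on; and if you instead make the residue--class statement the inductive claim, the modulus $q_s$ is built from primes \emph{outside} $\Pi'$, so shrinking the set of primes governing the residue does nothing to shrink the data governing the modulus. In short, the drop from $r$ to $r-1$ primes does not propagate.

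What is genuinely needed here is nontrivial Diophantine input of Thue--Mahler type. Erd\H{o}s and Mahler's own argument rests on Mahler's $p$--adic extension of the Thue--Siegel theorem (equivalently, results on the greatest prime factor of binary forms, closely related to the finiteness of the $S$--unit equation). The purely combinatorial bound $\Psi_{\Pi}(N)=O((\log N)^{r})$ you invoke is much too coarse to control a single residue class to a modulus comparable to the range, and no elementary induction recovers the missing arithmetic.
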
 

\begin{proof}[Proof of Theorem~\ref{nonempty}]
Let $\pi_0$ and $\pi_1$ be two distinct primes in $\Pi$. From Theorem~\ref{erdmahl} and Property~(\ref{approx}), it is enough to prove the existence of uncountably many irrationals for which all the denominators $q_s$ of their convergents are only divisible by $\pi_0$ and $\pi_1$ as soon as $s\ge 1$. To this end, first note that the relationships~(\ref{pn}) and~(\ref{qn}) may be rewritten more succinctly in the form 
$$
\begin{pmatrix}
  p_s & q_s\\
  p_{s-1} & q_{s-1}
\end{pmatrix}
= \prod_{k=0}^{s} 
\begin{pmatrix}
  a_k & 1\\
  1 & 0
\end{pmatrix}
$$
if $s\ge 0$. Taking the determinant on both sides of this equation, it appears on the one hand that any sequence of positive integers $(p_s)_{s\ge 0}$ satisfying~(\ref{pn}) is such that, for any $s\ge 0$, $\gcd(p_s, q_s) =1$ if the sequence $(q_s)_{s\ge 0}$ satisfies~(\ref{qn}). It is therefore enough to solve~(\ref{qn}), where the unknowns are the two sequences $(a_s)_{s\ge 0}$ and $(q_s)_{s\ge 0}$. On the other hand, the same relationship shows that two consecutive denominators $q_s$ and $q_{s+1}$ are coprime~: this leads one to choose all the $q_{2s}$ ($s\ge 0$) as powers of the prime $\pi_0$ and all the $q_{2s+1}$ ($s\ge 0$) as powers of $\pi_1$ (or conversely). 

Thus, let $q_0=1$ and $q_1=a_1 q_0 = \pi_1$. Setting $q_{2s+i} = \pi_{i}^{\alpha_{2s+i}}$ for $s\ge 0$ and $i=0,1$ with $(\alpha_s)_{s\ge 0}$ an increasing sequence of positive integers such that $\alpha_0=0$, Equation~(\ref{qn}) amounts to the following~: for all $s\ge0$ and all $i\in\left\{0,1\right\}$,
\begin{equation*}
a_{2s+i}\, \pi_{i}^{\alpha_{2s-1+i}} = \pi_{1-i}^{\alpha_{2s-2+i}} \left(\pi_{1-i}^{\alpha_{2s+i}-\alpha_{2s-2+i}} -1\right).
\end{equation*}
It should be clear at this stage that both sequences $(a_s)_{s\ge 0}$ and $(q_s)_{s\ge 0}$ shall be uniquely determined if the sequence $(\alpha_s)_{s\ge 0}$ (with $\alpha_0=0$) is chosen in such a way that, for all $s\in\N^*$,
\begin{equation*}
\pi_{1-i}^{\alpha_{2s+i}-\alpha_{2s-2+i}} \equiv 1 \pmod{\pi_i^{\alpha_{2s-1+i}}} 
\end{equation*}
for $i=0,1$. It is easy to construct such a sequence $(\alpha_s)_{s\ge 0}$ (with $\alpha_0=0$) by induction~: first, for two coprime integers $a$ and $b$, denote by $\omega(a,b)$ the order of $b$ in the multiplicative group $\left(\Z/\!\raisebox{-.65ex}{\ensuremath{a\Z}}\right)^{\times}$. Then, choose $\alpha_1 \in \N^*$ and set, for any $s\ge 1$ and $i=0,1$, $$\alpha_{2s+i} = \alpha_{2s-2+i} + k_{2s+i}\,\omega(\pi_{i}^{\alpha_{2s-1+i}},\pi_{1-i}),$$ where at each step the integer $k_{2s+i}$ is chosen such that $\alpha_{2s+i} > \alpha_{2s+i-1}$.

This proves the existence of a Liouville number in $W_{\tau}^*(\Pi)$ ($\tau>2$). Since infinitely many choices are possible for the integer $k_{2s+i}$ at each step, $W_{\tau}^*(\Pi)$ actually contains uncountably many Liouville numbers as soon as $\tau>2$.
\end{proof}

\paragraph{}
To conclude, various issues which have arisen throughout this note are now collected. 

\begin{qst}\label{qst1}
Let $\Pi$ be any \textit{finite} system of prime numbers and let $\tau >2$ be a real number. Define $W_{\tau}^*(\Pi)$ as in~(\ref{wpi}).

Does $W_{\tau}^*(\Pi)$ contain any non--Liouville numbers?
\end{qst}

\begin{qst}\label{qst2}
Let $n\ge 2$ be an integer and let $\tau>1$ be a real number. Let $Q$ be an $\N^*\backslash Q$--free set of integers such that $\nu(Q)=1$. Define $W_{\tau, n}^*(Q)$ as in~(\ref{wtnq}).

Is it true that, if $(x_1, \dots, x_n)\in W_{\tau, n}^*(Q)$, then $x_1$, $x_2$, $\dots$, $x_n$ are $\Q$--linearly dependent?
\end{qst}

Thus, Question~\ref{qst2} amounts to establishing the converse inclusion in~(\ref{inclus}) for suitable values of $A$, $u$ and $v$. This is a problem of particular interest when the set $Q$ is chosen as in~(\ref{qm})~: indeed, the method used by Babai and \v{S}tefankovi\v{c} in~\cite{baba} provides in this case a positive answer to Question~\ref{qst2} if one takes $\epsilon/q$ for a fixed $\epsilon >0$ as the error function in $W_{\tau, n}^*(Q)$ instead of $q^{-\tau}$. However, their method, which consists of investigating certain probability measures on lattices and Fourier transforms of such measures, cannot be extended to a more general class of error functions.

\paragraph{}
Regarding the computation of the Hausdorff dimension of a liminf set such as $W_{\tau, n}^*(Q)$, the main difficulty is to find an upper bound. Theorem~\ref{princi} shows that such a dimension strongly depends on the \textit{size} of $Q$ measured by its exponent of convergence $\nu(Q)$. Some heuristic arguments lead one to state the following two conjectures~:

\begin{conj}\label{conj1}
Under the assumptions of Question~\ref{qst1}, $$\dim W_{\tau}^*(\Pi) = 0.$$
\end{conj}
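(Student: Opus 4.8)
The plan is to work entirely with continued fractions, which is legitimate here because $\tau>2$: by~(\ref{approx}), every rational $p/q$ with $|x-p/q|<q^{-\tau}$ and $q$ large is a convergent $p_s/q_s$ of $x$. Since $|x-p_s/q_s|=\bigl(q_s(q_{s+1}+\theta_sq_s)\bigr)^{-1}$ with $\theta_s=[0;a_{s+2},a_{s+3},\dots]\in[0,1)$, the fraction $p_s/q_s$ is a $\tau$--approximation of $x$ exactly when $q_{s+1}>q_s^{\tau-1}-\theta_sq_s$, that is, up to a harmless additive constant, when $a_{s+1}\ge q_s^{\tau-2}$; call such an index $s$ a \emph{$\tau$--jump} of $x$. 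With this vocabulary, $x\in W_\tau^*(\Pi)$ if and only if $x$ has infinitely many $\tau$--jumps and all but finitely many of them occur at an index $s$ with $\gcd\bigl(q_s,\prod_{\pi\in\Pi}\pi\bigr)>1$. First I would reduce to a single prime: as $\Pi$ is finite, partition $W_\tau^*(\Pi)$ according to which $\pi^\ast\in\Pi$ divides $q_s$ for infinitely many $\tau$--jumps $s$; by countable stability of the Hausdorff dimension it then suffices to bound $\dim W(\pi^\ast)$, where $W(\pi^\ast)\subseteq W_\tau^*(\Pi)$ consists of those $x$ all of whose $\tau$--jumps past some point sit at $\pi^\ast$--divisible denominators.

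For the upper bound I would run a Cantor--type covering argument organised by the $\tau$--jumps. Between two consecutive jumps the convergents obey only the mild constraint $q_{s+1}<q_s^{\tau-1}$, so there the continued fraction expansion is essentially unconstrained and the only genuine loss of Hausdorff dimension occurs at the jumps themselves, where the passage from the scale $\asymp q_s^{-2}$ of the cylinder $I(a_1,\dots,a_s)$ to the scale $\asymp q_s^{-\tau}$ of its refinements $I(a_1,\dots,a_s,a_{s+1})$ with $a_{s+1}\ge q_s^{\tau-2}$ takes place inside a subinterval of length $\asymp q_s^{-\tau}$ clustered around $p_s/q_s$, and is \emph{permitted only when} $\pi^\ast\mid q_s$. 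The arithmetic ingredient is that, among the level--$s$ cylinders $I(a_1,\dots,a_s)$, only a proportion $\asymp 1/\pi^\ast$ meet the requirement $\pi^\ast\mid q_s$; this follows from the equidistribution of $q_s$ modulo $\pi^\ast$ over these cylinders, itself a consequence of the fact that the transfer matrices $\left(\begin{smallmatrix}a&1\\1&0\end{smallmatrix}\right)$ reduced modulo $\pi^\ast$ generate, and equidistribute in, $GL_2(\mathbb Z/\pi^\ast\mathbb Z)$. Accumulating this loss over all the $\tau$--jumps encountered before scale $\delta$, the objective is to show that $W(\pi^\ast)$ can be covered at scale $\delta$ by at most $\delta^{-\epsilon}$ sets of diameter $\le\delta$ for every $\epsilon>0$, whence $\dim W(\pi^\ast)=0$.

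The bookkeeping in this last step is the hard part, and is exactly why the statement remains only a conjecture. A point of $W(\pi^\ast)$ may have its $\tau$--jumps extraordinarily sparse — as few as $\asymp\log\log(1/\delta)$ of them before scale $\delta$, since a long run of generic partial quotients between two jumps can multiply $\log q_s$ by an arbitrarily large factor — and then the accumulated arithmetic loss $\asymp(\pi^\ast)^{-\#\{\text{jumps}\}}$ is merely a poly--logarithmic, hence dimension--neutral, correction. Turning the heuristic into a proof therefore requires ruling out such economical configurations, which is tantamount to a Dirichlet/ubiquity statement, at the sharp scale $q^{-2}$, for fractions whose denominator is divisible by the fixed prime $\pi^\ast$; controlling it is of the same depth as the open problems on the distribution of $B$--free integers mentioned in the discussion of the case $\nu(Q)=1$. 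An alternative, and perhaps more promising, route is to settle Question~\ref{qst1} in the negative first — that is, to show, by applying the Erd\H{o}s--Mahler criterion of Theorem~\ref{erdmahl} not to $q_s$ alone but to the triple $q_{s-1},q_s,q_{s+1}$ attached to each $\tau$--jump, that every element of $W_\tau^*(\Pi)$ is a Liouville number — and then to conclude from the classical fact that the set of Liouville numbers has Hausdorff dimension $0$. On that route I would expect the control of the prime factors of $q_{s-1}$ and $q_{s+1}$ at a jump, which the hypotheses do not directly restrict, to be the main obstacle.
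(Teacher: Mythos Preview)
The statement you are asked about is \emph{Conjecture~\ref{conj1}} in the paper, not a theorem: the paper offers no proof whatsoever, only the remark that a negative answer to Question~\ref{qst1} would imply it. There is therefore nothing in the paper to compare your argument against, and your write-up is honest about this --- you explicitly flag that ``this is exactly why the statement remains only a conjecture'' and identify the obstruction (the $\tau$--jumps may be as sparse as $\asymp\log\log(1/\delta)$, so the arithmetic saving $(\pi^\ast)^{-\#\{\text{jumps}\}}$ is dimension--neutral). In that sense your proposal is not a proof and does not pretend to be one; it is a strategy sketch with correctly located gaps, which is the appropriate response to an open conjecture.

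Two technical comments on the sketch itself. First, your reduction to a single prime is slightly misstated: from ``every sufficiently late $\tau$--jump has $q_s$ divisible by \emph{some} $\pi\in\Pi$'' one only gets, by pigeonhole, that some fixed $\pi^\ast$ divides $q_s$ for \emph{infinitely many} jumps, not for all of them past a point; your set $W(\pi^\ast)$ should be defined accordingly, and then the covering argument must still handle the interleaved jumps where $\pi^\ast\nmid q_s$. Second, the equidistribution assertion for $q_s\bmod\pi^\ast$ over cylinders is plausible but not automatic: the matrices $\left(\begin{smallmatrix}a&1\\1&0\end{smallmatrix}\right)$ with $a$ ranging over \emph{all} residues do generate $GL_2(\mathbb Z/\pi^\ast\mathbb Z)$, but at a jump you are conditioning on $a_{s+1}\ge q_s^{\tau-2}$, and between jumps you are conditioning on bounded partial quotients, so the relevant random walk is not the unrestricted one. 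Your alternative route via Question~\ref{qst1} and the Erd\H{o}s--Mahler theorem is exactly the implication the paper records in its final paragraph; your diagnosis that the uncontrolled prime factorisation of $q_{s\pm1}$ at a jump is the obstacle there is correct and is, as far as is known, genuinely open.
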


\begin{conj}\label{conj2}
Under the assumptions of Question~\ref{qst2}, $$\dim W_{\tau, n}^*(Q) = \frac{n}{\tau}$$ when $\tau > 1+1/(n-1).$
\end{conj}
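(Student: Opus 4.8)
The plan is to derive Conjecture~\ref{conj2} from an affirmative answer to Question~\ref{qst2}, combined with the last lemma (the one computing $\dim\left(\Gamma_n(A,u,v)\cap W_{\tau, n}\right)$) and the countable stability of the Hausdorff dimension. Since the lower bound $\dim W_{\tau, n}^*(Q)\ge n/\tau$ is already furnished by Theorem~\ref{princi} --- it comes from the inclusion~(\ref{inclus}) together with that lemma --- only the upper bound $\dim W_{\tau, n}^*(Q)\le n/\tau$ is at stake. Assume it were known that the coordinates of every point of $W_{\tau, n}^*(Q)$ are $\Q$--linearly dependent. Clearing denominators in such a relation yields a vector $A\in\Z^n\setminus\{0\}$ with $\sum_{i=1}^n a_i x_i=0$, so that, extending the notation of~(\ref{hyperplan}) to allow $A\in\Z^n\setminus\{0\}$ and $u=0$, $$W_{\tau, n}^*(Q)\subset\bigcup_{A\in\Z^n\setminus\{0\}}\left(\Gamma_n(A,0,1)\cap W_{\tau, n}\right).$$ The proof of the last lemma is insensitive to the signs of the $a_i$ and to the value of the rational constant, so $\dim\left(\Gamma_n(A,0,1)\cap W_{\tau, n}\right)=n/\tau$ for every such $A$ whenever $\tau>1+1/(n-1)$; being a countable union of sets of dimension $n/\tau$, the set $W_{\tau, n}^*(Q)$ then has dimension at most $n/\tau$, which gives the conjecture. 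The same deduction goes through verbatim if one only establishes that every point of $W_{\tau, n}^*(Q)$ lies on some rational affine hyperplane $\Gamma_n(A,u,v)$.

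It remains to attack Question~\ref{qst2}, which I would do by contraposition. Given $(x_1,\dots,x_n)\in W_{\tau, n}$ whose coordinates are linearly independent over $\Q$, the aim is to produce infinitely many rationals $(p_1/q,\dots,p_n/q)$ with $q\in Q$ and $|x_i-p_i/q|<q^{-\tau}$ for all $i$; since this contradicts the condition in~(\ref{wtnq}) that only finitely many good approximations have denominator in $Q$, it would follow that such a point cannot belong to $W_{\tau, n}^*(Q)$. What is required is thus a \emph{restricted denominator} refinement of Jarn\'ik's theorem: the denominators witnessing the $\tau$--approximability of a $\Q$--linearly independent point cannot, beyond a finite stage, all avoid $Q$. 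The hypothesis $n\ge 2$ is indispensable --- for $n=1$ every irrational is trivially linearly independent, yet Theorem~\ref{nonempty} produces uncountably many irrationals in $W_{\tau,1}^*(Q)$ --- so any proof must genuinely exploit the extra coordinates in order to steer the approximating denominator into $Q$. When $Q=Q(m)$, the natural route is to adapt the lattice-and-Fourier-transform method of Babai and \v{S}tefankovi\v{c}~\cite{baba}, which yields exactly this restricted-denominator statement for the weaker error function $\epsilon/q$ ($\epsilon>0$ fixed); the task would be to carry their probability estimates on lattices through as the approximating balls shrink from radius $\epsilon/q$ to radius $q^{-\tau}$. For a general $\N^*\setminus Q$--free set with $\nu(Q)=1$ no such structure is available, and one would instead need quantitative information on the distribution of $\N^*\setminus Q$--free numbers --- gaps between consecutive ones, or their counting function on short intervals and in arithmetic progressions --- of the kind anticipated by Erd\H{o}s~\cite{erdos}.

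The main obstacle is precisely this last step, and it is where I expect the plan to stall. The Babai--\v{S}tefankovi\v{c} argument is tailored to a fixed approximation quality and, as the text already notes, does not survive the passage to a genuinely $\tau$--contracting error function; I know of no substitute. An alternative that avoids Question~\ref{qst2} would be to bound $\dim W_{\tau, n}^*(Q)$ from above directly, building an economical cover of the liminf set out of the fact that its points are \emph{not} $\tau$--well approximable by fractions with denominator in $Q$ --- a Cassels--Gallagher style count of the ``forbidden'' balls $\left\{|x_i-p_i/q|<q^{-\tau}\right\}$, $q\in Q$, that such points must miss. But carrying this out again demands control over how the elements of $Q$ are interspersed among the denominators relevant to a given point of $W_{\tau, n}$, so one is led back to the same conjectural input on $B$--free numbers. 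This is why the statement is recorded here only as a conjecture.
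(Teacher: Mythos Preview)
The paper does not prove this statement; it records it explicitly as a conjecture and merely observes, in the sentence following Conjecture~\ref{conj2}, that a positive answer to Question~\ref{qst2} would imply it. Your proposal arrives at exactly the same conclusion by exactly the same route: the lower bound is Theorem~\ref{princi}, and the upper bound would follow from Question~\ref{qst2} via the countable union
\[
W_{\tau,n}^*(Q)\subset\bigcup_{A\in\Z^n\setminus\{0\}}\bigl(\Gamma_n(A,0,1)\cap W_{\tau,n}\bigr)
\]
together with countable stability of Hausdorff dimension and the (easy, covering) upper bound $\dim\bigl(\Gamma_n(A,0,1)\cap W_{\tau,n}\bigr)\le n/\tau$. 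You make this implication explicit where the paper leaves it as ``obviously'', and you correctly diagnose that the Babai--\v{S}tefankovi\v{c} method does not extend to the error $q^{-\tau}$, which is precisely the obstruction the paper flags. In short, there is no proof to compare against: your analysis matches the paper's, and your final sentence --- that this is why the statement is only a conjecture --- is the right assessment.
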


Obviously, a negative (resp. positive) answer to Question~\ref{qst1} (resp. to Question~\ref{qst2}) would imply Conjecture~\ref{conj1} (resp. Conjecture~\ref{conj2}).

\renewcommand{\abstractname}{Acknowledgements}
\begin{abstract}
The author would like to thank his PhD supervisor Detta Dickinson for suggesting the problem and for discussions which helped to develop ideas put forward. He is supported by the Science Foundation Ireland grant RFP11/MTH3084.
\end{abstract}

\bibliographystyle{plain}
\bibliography{Note}

\end{document}